\newtheorem{theorem}{Theorem}[section]
\newtheorem{corollary}[theorem]{Corollary}
\newtheorem{lemma}[theorem]{Lemma}
\theoremstyle{definition}
\numberwithin{equation}{section}
\DeclarePairedDelimiter\floor{\lfloor}{\rfloor}
\numberwithin{equation}{section}
\begin{document}
\title{Polynomial Form of Binary Cyclotomic Polynomials}
\author{Aaron Elliot\\
College of Information and Computer Science\\
University of Massachusetts Amherst\\
Computer Science Building, Amherst MA, 01002\\
E-mail: aelliot@cs.umass.edu
}

\maketitle

\begin{abstract}
This paper presents a closed form polynomial expression for the cyclotomic polynomial $\Phi_{pq}(X)$, where $p,q$ are distinct primes. We contrast this against expressions for binary cyclotomic polynomials in (Lam and Leung 1996) and (Lenstra 1979). In addition, we present a related result on factoring $X^{ab}-1$, where $a,b$ are coprime natural numbers.
\end{abstract}

\section{Introduction}

This paper presents two theorems. Theorem \ref{thrm:cycl} is a closed form polynomial expression for the cyclotomic polynomial $\Phi_{pq}(X)$, where $p$ and $q$ are distinct primes. Related to that result, theorem \ref{thrm:factor} is a factorization of $X^{ab} - 1$, where $a$ and $b$ are coprime natural numbers.\\

There exists a rich body of literature exploring the cyclotomic polynomial $\Phi_{pq}(X)$. To my knowledge, the exploration of $\Phi_{pq}(X)$ began with Miggotti in 1883, when he showed that the coefficients of $\Phi_{pq}(X)$ are within $\{1,-1,0\}$.[mIG83]\\

Similarly to this paper, a multitude of authors have proposed explicit expressions for $\Phi_{pq}(X)$. We contrast our expression for $\Phi_{pq}(X)$ against the expressions proposed by Lenstra in 1979[HWL79] and Lam and Leung in 1996[LL96].\\

Lenstra demonstrated in 1979 that
\begin{equation}
    \Phi_{pq}(X) = \sum_{i=0}^{\lambda - 1} \sum_{j=0}^{\mu -1} X^{ip+jq} - \sum_{i=\lambda}^{q-1}\sum_{j=\mu}^{p-1} X^{ip+jq-pq}
\label{eq:Lenstra}
\end{equation}

where $\lambda, \mu\in\mathbb{N}$ such that $0<\lambda<q,$ $0<\mu <p$, $\lambda p \equiv_q 1$ and $\mu q \equiv_p 1$.[HWL79] \\

Lam and Leung demonstrated in 1996 the conditions under which the coefficients of $\Phi_{pq}(X)$ are $1,-1$, or $0$. Using a result from Boot and Greenberg[BG64] that there exists non-negative integers $r,s$ such that $rp+sq = (p-1)(q-1)$, Lam and Leung showed,
\begin{equation}
    \Phi_{pq}(X) = \sum_{k=0}^{(p-1)(q-1)}a_k X^k
\label{eq:LamLeung}
\end{equation}

where $a_k = 1$ if and only if there exists a solution to $k = ip + jq$ for some $i\in[0,r]$ and $j\in[0,s]$; $a_k = -1$ if and only if there exists a solution to $k+pq=ip+jq$ for some $i\in[r+1,q-1]$ and $j\in[s+1,p-1]$; lastly, $a_k=0$ otherwise.[LL96]\\

However, as we will demonstrate in corollary \ref{ch:lamleungLenstra} both (\ref{eq:Lenstra}) and (\ref{eq:LamLeung}) are identical once simplified.\\

The main result of this paper, in theorem \ref{thrm:cycl}, stands at contrast against the expressions (\ref{eq:Lenstra}) and (\ref{eq:LamLeung}). An advantage of the above expressions is that they lend themselves to some analysis tasks well. However, the main disadvantage of both the above expressions is that they require solving for integer solutions of $\mu,\lambda$ and $r,s$ respectively. A possible route for further research is investigating the full repercussions of the expression we present equalling the above expressions for $\Phi_{pq}(X)$.\\

\section{Theorems}

\begin{corollary}
\label{ch:lamleungLenstra}
The expressions (\ref{eq:Lenstra}) and (\ref{eq:LamLeung}) can be simplified to identical expressions.
\end{corollary}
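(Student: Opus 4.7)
The plan is to show that the pairs $(\lambda,\mu)$ appearing in Lenstra's formula and $(r,s)$ appearing in Lam--Leung's formula are related by $r=\lambda-1$ and $s=\mu-1$, after which the two expressions become term-for-term identical. The only real content is then a uniqueness statement for representations $k=ip+jq$.

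First I would derive the identification $r=\lambda-1$, $s=\mu-1$. Starting from $rp+sq=(p-1)(q-1)=pq-p-q+1$ and reducing modulo $p$, the term $rp$ vanishes, giving $sq\equiv 1-q\pmod{p}$, hence $(s+1)q\equiv 1\pmod{p}$. Since $0\le s\le p-2$ (the case $s=p-1$ is ruled out because $(p-1)p+(p-1)q>(p-1)(q-1)$ when $p\ge 2$), the integer $s+1$ lies in $[1,p-1]$ and thus coincides with the unique $\mu$ in that range with $\mu q\equiv 1\pmod{p}$. The symmetric argument modulo $q$ yields $r+1=\lambda$.

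Next I would verify the uniqueness of representations. If $ip+jq=i'p+j'q$ with $i,i'\in[0,q-1]$ and $j,j'\in[0,p-1]$, then $(i-i')p\equiv 0\pmod{q}$; since $\gcd(p,q)=1$, this forces $i=i'$ and hence $j=j'$. This means: in Lenstra's first double sum each exponent $ip+jq$ occurs at most once, and similarly the exponents $ip+jq-pq$ in the second double sum are distinct. In Lam--Leung's formula this uniqueness is implicit in the statements ``$a_k=\pm 1$ iff there exists\dots''.

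Finally I would put the two simplifications together. Under $r=\lambda-1$, $s=\mu-1$, Lenstra's index ranges $0\le i\le\lambda-1$, $0\le j\le\mu-1$ become $0\le i\le r$, $0\le j\le s$, exactly matching the positive-coefficient set in Lam--Leung; and $\lambda\le i\le q-1$, $\mu\le j\le p-1$ become $r+1\le i\le q-1$, $s+1\le j\le p-1$, matching the negative-coefficient set. By the uniqueness step, expanding $\sum a_k X^k$ term by term reproduces Lenstra's expression verbatim. The main obstacle, such as it is, is the careful bookkeeping in Step 1 to ensure that $s+1$ and $r+1$ land in the correct residue range so that they can be identified with $\mu$ and $\lambda$; everything else is routine.
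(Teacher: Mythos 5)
Your proposal is correct and takes essentially the same route as the paper: identify Lenstra's $(\lambda,\mu)$ with Lam--Leung's $(r+1,s+1)$ via the modular-inverse characterization, and then observe that the two double sums have identical index ranges. The only (harmless, arguably beneficial) differences are that you derive $(s+1)q\equiv 1\pmod{p}$ directly from $rp+sq=(p-1)(q-1)$ rather than citing [BG64] and routing both parameters through the explicit Fermat expressions $[p^{q-2}]_q$, $[q^{p-2}]_p$ as the paper does, and you make explicit the uniqueness-of-representation argument that the paper treats as trivial when rewriting the coefficient-indexed sum as a double sum.
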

\begin{proof}
From [HWL79], given distinct primes $p$,$q$, and $\lambda, \mu\in\mathbb{N}$ such that $0<\lambda<q,$ $0<\mu <p$, $\lambda p \equiv_q 1$ and $\mu q \equiv_p 1$. However, $\lambda$ is simply the multiplicative inverse of $p$ modulo $q$ in its unique reduced form, and $\mu$ is the multiplicative inverse of $q$ modulo $p$ in its unique reduced form. Euler's Totient Theorem implies that, $\lambda = [p^{q-2}]_q$ and likewise, $\mu = [q^{p-2}]_p$. Implying,

\begin{equation}
    \Phi_{pq}(X) = \sum_{i=0}^{[p^{q-2}]_q - 1} \sum_{j=0}^{[q^{p-2}]_p-1} X^{ip+jq} - \sum_{i=[p^{q-2}]_q}^{q-1}\sum_{j=[q^{p-2}]_p}^{p-1} X^{ip+jq-pq}    
    \label{eq:lenstrareduced}
\end{equation}

From [BG64], as $p,q$ are distinct primes, there exists $r,s$ are such that they are the smallest non-negative integers solution to $rp+sq = (p-1)(q-1)$. As well [BG64] showed that $(s+1)q \equiv_p 1$, and likewise $(r+1)p \equiv_q 1$. By the same logic as above, it follows that, $s = [q^{p-2}]_p- 1$ and $r = [p^{q-2}]_q- 1$.\\

However, [LL96] show that

\begin{equation}
    \Phi_{pq}(X) = \sum_{\substack{k=0 \\ \exists i\in[0,r], j\in[0,s]\\: k = ip + jq }}^{(p-1)(q-1)} X^k - \sum_{\substack{k=0 \\ \exists i\in[r+1,q-1], j\in[s+1,p-1]\\: k+pq = ip + jq }}^{(p-1)(q-1)} X^k
    \label{eq:lamleung2}
\end{equation}
However, $$\{k \in \mathbb{N} : 0\leq k\leq(p-1)(q-1) \text{ and } \exists  i\in[0,r], j\in[0,s] \text{ s.t. } k = ip + jq\}$$ is trivially equal to $\{ip+jq :  i\in[0,r], j\in[0,s]\}$. Likewise,
$$\{k \in \mathbb{N} : 0\leq k\leq(p-1)(q-1) \text{ and } \exists  i\in[r+1,q-1], j\in[s+1,p-1] \text{ s.t. } k+pq = ip + jq\}$$
is trivially equal to  $\{ip+jq-pq :  i\in[r+1,q-1], j\in[s+1,p-1]\}$. Thus, we can rewrite (\ref{eq:lamleung2}) as,

\begin{align}
    \Phi_{pq}(X) &= \sum_{i=0}^{r}\sum_{j=0}^{s}X^{ip+jq} - \sum_{i=r+1}^{q-1}\sum_{j=s+1}^{p-1}X^{ip+jq-pq}\\
    &= \sum_{i=0}^{ [p^{q-2}]_q- 1}\sum_{j=0}^{[q^{p-2}]_p- 1}X^{ip+jq} - \sum_{i= [p^{q-2}]_q}^{q-1}\sum_{j=[q^{p-2}]_p}^{p-1}X^{ip+jq-pq}
    \label{eq:lamleungreduced}
\end{align}

Thus, once the reduced value are substituted for $\mu,\lambda$ and $s,r$  we arrive at the identical formulas (\ref{eq:lenstrareduced}), (\ref{eq:lamleungreduced}).
\end{proof}

\begin{lemma}
Given $a,b,i \in \mathbb{N}$ such that $a$ is coprime to $b$ and $a>b$, $$X^{[ai]_b} + (X^{b}-1)(\sum_{j=1}^{\floor{\frac{ai}{b}}}X^{ai - bj}) = X^{ai} $$
\label{lemma:xai}
\end{lemma}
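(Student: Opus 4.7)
The plan is to reduce the identity to the standard geometric sum formula $(X^b-1)\sum_{k=0}^{n-1}X^{bk}=X^{bn}-1$ after a single reindexing.

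First I would introduce the notation $n=\floor{ai/b}$ and $r=[ai]_b$, so that by the division algorithm $ai = bn+r$ with $0\le r<b$. This rewrites the target identity as
\[
X^{r} + (X^{b}-1)\sum_{j=1}^{n}X^{bn+r-bj} \;=\; X^{bn+r}.
\]

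Next I would pull the common factor $X^r$ out of the sum and substitute $k=n-j$, turning $\sum_{j=1}^{n}X^{bn+r-bj}$ into $X^{r}\sum_{k=0}^{n-1}X^{bk}$. The telescoping identity $(X^b-1)\sum_{k=0}^{n-1}X^{bk}=X^{bn}-1$ then gives
\[
(X^b-1)\sum_{j=1}^{n}X^{ai-bj} \;=\; X^{r}(X^{bn}-1) \;=\; X^{bn+r}-X^{r} \;=\; X^{ai}-X^{[ai]_b},
\]
and adding $X^{[ai]_b}$ to both sides yields the claim.

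There is no real obstacle here; the one thing to note is that the hypotheses $a>b$ and $\gcd(a,b)=1$ are not needed to establish this algebraic identity (the argument works whenever $b\ge 1$, with the convention that the sum is empty when $\floor{ai/b}=0$, in which case $[ai]_b = ai$ and the identity is trivial). These hypotheses are presumably present because they are the regime in which the lemma will be applied in the subsequent theorems, so I would simply carry them along without using them in the proof.
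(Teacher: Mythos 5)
Your proposal is correct and follows essentially the same route as the paper: both rest on writing $ai = b\floor{ai/b} + [ai]_b$ and then collapsing $(X^b-1)\sum_{j=1}^{\floor{ai/b}}X^{ai-bj}$ to $X^{ai}-X^{[ai]_b}$, the paper by direct telescoping of the shifted sums and you by factoring out $X^{[ai]_b}$ and citing the geometric-sum identity, which is the same computation in packaged form. Your observation that coprimality and $a>b$ are not needed (with the empty-sum convention handling $\floor{ai/b}=0$, the case the paper treats separately as $i=0$) is accurate.
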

\begin{proof}
Consider $a,b,i \in \mathbb{N}$ such that $a$ is coprime to $b$ and $a>b$. If i=0, then
$$X^{[a0]_b} + (X^{b}-1)(\sum_{j=1}^{\floor{\frac{a0}{b}}}X^{a0 - bj}) = X^0 + (X^{b}-1)*0 = X^0$$
If i>0, then
\begin{align*}
X^{[ai]_b} + (X^{b}-1)(\sum_{j=1}^{\floor{\frac{ai}{b}}}X^{ai - bj})
= & X^{[ai]_b} + (X^{b}-1)(\sum_{j=1}^{\frac{ai - [ai]_b}{b}}X^{ai - bj})\\
= & X^{[ai]_b} + (\sum_{j=1}^{\frac{ai - [ai]_b}{b}}X^{ai - bj+b})-(\sum_{j=1}^{\frac{ai - [ai]_b}{b}}X^{ai - bj})\\
= & X^{[ai]_b} +(\sum_{j = 0}^{\frac{ai - [ai]_b}{b}-1}X^{ai-bj})-(\sum_{j = 1}^{\frac{ai - [ai]_b}{b}}X^{ai-bj})\\
= & X^{[ai]_b}+ X^{ai} - X^{ai-b(\frac{ai - [ai]_b}{b})}\\
= & X^{ai}\\
\end{align*}
\end{proof}

\begin{theorem}

Given $a,b \in \mathbb{N}$ such that $a$ is coprime to $b$ and $a>b$.
$$X^{ab}-1 = (X^{a}-1)(\sum_{i= 0}^{b-1}X^{i})(1+(X-1)\sum_{i=0}^{b-1}\sum_{j=1}^{\floor{\frac{ai}{b}}}X^{ai - bj})$$
\label{thrm:factor}
\end{theorem}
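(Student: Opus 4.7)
The plan is to reduce the theorem to a statement about the geometric-series factor in the decomposition $X^{ab}-1 = (X^a-1)\sum_{i=0}^{b-1}X^{ai}$, and then attack that statement by applying Lemma \ref{lemma:xai} termwise.

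First I would pull off $X^a-1$, reducing the claim to
\[
\sum_{i=0}^{b-1}X^{ai} = \Bigl(\sum_{i=0}^{b-1}X^{i}\Bigr)\Bigl(1+(X-1)\sum_{i=0}^{b-1}\sum_{j=1}^{\floor{ai/b}}X^{ai-bj}\Bigr).
\]
Next I would apply Lemma \ref{lemma:xai} to each index $i\in\{0,1,\ldots,b-1\}$, which rewrites $X^{ai}$ as $X^{[ai]_b} + (X^b-1)\sum_{j=1}^{\floor{ai/b}}X^{ai-bj}$. Summing over $i$ turns the left-hand side into
\[
\sum_{i=0}^{b-1}X^{[ai]_b}\;+\;(X^b-1)\sum_{i=0}^{b-1}\sum_{j=1}^{\floor{ai/b}}X^{ai-bj}.
\]

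The central observation is that, because $\gcd(a,b)=1$, the map $i\mapsto [ai]_b$ is a permutation of $\{0,1,\ldots,b-1\}$, so the first sum collapses to $\sum_{i=0}^{b-1}X^{i}$. This is the step I expect to be the one that really uses the hypotheses, and it is also the most delicate bookkeeping, since the reader needs to see that the inner sum indexed by $j$ is well-defined exactly when $\floor{ai/b}=(ai-[ai]_b)/b$, which is what Lemma \ref{lemma:xai} already established.

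To finish, I would substitute $X^b-1=(X-1)\sum_{i=0}^{b-1}X^{i}$ and factor the common $\sum_{i=0}^{b-1}X^{i}$ out of both summands, obtaining the desired product form. Multiplying back by $X^a-1$ recovers $X^{ab}-1$ on the left and yields the theorem. The only obstacle worth mentioning is verifying the permutation step cleanly; everything else is algebraic manipulation tied directly to the lemma.
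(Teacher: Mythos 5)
Your proposal is correct and follows essentially the same route as the paper's own proof: factor out $X^a-1$, apply Lemma \ref{lemma:xai} termwise, use the fact that $i\mapsto [ai]_b$ permutes $\{0,\dots,b-1\}$ (the complete residue system argument), and then factor $X^b-1=(X-1)\sum_{i=0}^{b-1}X^i$ to extract the common factor $\sum_{i=0}^{b-1}X^i$. No gaps to report.
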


\begin{proof}

Given $a,b \in \mathbb{N}$ : $a,b$ are coprime and $a>b$,
let $V = \{0,1,2,...,b-1\}$, $W = \{ai : i \in V\}$.\\

From the definition of complete residue systems, [Raj13] definition 13, V forms a complete residue system modulo $b$. As $a,b$ are coprime, from [Raj13] Theorem 24, it follows that W forms a complete residue system modulo $b$. Therefore $\{[i]_b : i\in W\} = \{[i]_b : i\in V\} = V$. Let $ U = \{[i]_b : i\in W\}$. It follows that,
\begin{gather}
\sum_{i=0}^{b-1}(X^{[ai]_b}) = \sum_{u\in U}(X^{u}) = \sum_{v \in V}(X^{v}) = \sum_{i = 0}^{b-1}(X^{i})
\label{eqn:UV}
\end{gather}
Consider $X^{ab} - 1$.
\begin{gather}
X^{ab} - 1 =  (X^{a}-1)\sum_{i=0}^{b-1}X^{ai}
\end{gather}
Using lemma \ref{lemma:xai}, we substitute  $X^{ai}$, 
\begin{gather}
\begin{align}
		 (X^{a}-1)\sum_{i=0}^{b-1}X^{ai}  &=  (X^{a}-1)\sum_{i=0}^{b-1}(X^{[ai]_b} + (X^{b}-1)\sum_{j=1}^{\floor{\frac{ai}{b}}}X^{ai - bj})\\
           &=  (X^{a}-1)(\sum_{i=0}^{b-1}(X^{[ai]_b}) + (X^{b}-1)\sum_{i=0}^{b-1}\sum_{j=1}^{\floor{\frac{ai}{b}}}X^{ai - bj})
\end{align}
\end{gather}
Using equation \ref{eqn:UV},
\begin{gather}
        =  (X^{a}-1)(\sum_{i= 0}^{b-1}X^{i}+(X^b-1)\sum_{i=0}^{b-1}\sum_{j=1}^{\floor{\frac{ai}{b}}}X^{ai - bj})\\
           =  (X^{a}-1)(\sum_{i= 0}^{b-1}X^{i})(1+(X-1)\sum_{i=0}^{b-1}\sum_{j=1}^{\floor{\frac{ai}{b}}}X^{ai - bj})
\end{gather}
Thus,
\begin{gather}
X^{ab}-1 = (X-1)(\sum_{i= 0}^{a-1}X^{i})(\sum_{i= 0}^{b-1}X^{i})(1+(X-1)\sum_{i=0}^{b-1}\sum_{j=1}^{\floor{\frac{ai}{b}}}X^{ai - bj})
\end{gather}
\end{proof}

\begin{theorem} 
Given distinct primes $p$ and $q$ such that $p>q$.
$$\Phi_{pq}(X) = (1+(X-1)\sum_{i=0}^{q-1}\sum_{j=1}^{\floor{\frac{pi}{q}}}X^{pi - qj})$$
\label{thrm:cycl}
\end{theorem}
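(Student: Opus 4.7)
The plan is to recognize that Theorem~\ref{thrm:factor}, specialized to $a=p$ and $b=q$, already isolates $\Phi_{pq}(X)$ as the rightmost factor in its factorization of $X^{pq}-1$. So the proof reduces to matching the standard cyclotomic factorization of $X^{pq}-1$ against the factorization produced by Theorem~\ref{thrm:factor} and cancelling the elementary factors.

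First, I would recall that for distinct primes $p,q$,
\[ X^{pq}-1 \;=\; \prod_{d\mid pq}\Phi_d(X) \;=\; \Phi_1(X)\,\Phi_p(X)\,\Phi_q(X)\,\Phi_{pq}(X), \]
where $\Phi_1(X)=X-1$, $\Phi_p(X)=(X^p-1)/(X-1)$, and $\Phi_q(X)=(X^q-1)/(X-1)=\sum_{i=0}^{q-1}X^i$. Solving for $\Phi_{pq}(X)$ gives
\[ \Phi_{pq}(X) \;=\; \frac{(X^{pq}-1)(X-1)}{(X^p-1)(X^q-1)}. \]

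Next, since $p>q$ and $\gcd(p,q)=1$, I would apply Theorem~\ref{thrm:factor} with $a=p$ and $b=q$ to obtain
\[ X^{pq}-1 \;=\; (X^p-1)\Bigl(\sum_{i=0}^{q-1}X^i\Bigr)\Bigl(1+(X-1)\sum_{i=0}^{q-1}\sum_{j=1}^{\floor{pi/q}}X^{pi-qj}\Bigr). \]
Substituting this into the expression for $\Phi_{pq}(X)$ and identifying $\sum_{i=0}^{q-1}X^i$ with $(X^q-1)/(X-1)$, the factors $(X^p-1)$, $(X^q-1)$, and $(X-1)$ all cancel, leaving exactly the stated formula.

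This is essentially a one-line corollary of Theorem~\ref{thrm:factor}, so there is no substantial obstacle left to overcome. The only point requiring care is that the cancellation happens in $\mathbb{Q}(X)$, which is legitimate because $X^p-1$, $X^q-1$, and $X-1$ are nonzero polynomials; equivalently, one can clear denominators and verify the polynomial identity directly from Theorem~\ref{thrm:factor}. All the genuine combinatorial work has already been absorbed into Lemma~\ref{lemma:xai} and Theorem~\ref{thrm:factor}.
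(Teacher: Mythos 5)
Your proposal is correct and follows essentially the same route as the paper: factor $X^{pq}-1$ via the cyclotomic product over divisors, solve for $\Phi_{pq}(X)$, then substitute the factorization from Theorem~\ref{thrm:factor} with $a=p$, $b=q$ and cancel the factors $(X-1)$, $\sum_{i=0}^{p-1}X^i$, $\sum_{i=0}^{q-1}X^i$. No gaps; the cancellation justification you note is the same implicit step the paper takes.
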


\begin{proof}
From [Ge] Theorem 4,
$$X^{m}-1   = \prod_{k\in\{k\in \mathbb{N}:k|m\}}\Phi_{k}(X)$$

Given distinct primes $p,q$ such that $p>q$, consider the following:
\begin{gather}
\begin{align}
X^{pq}-1   = & \prod_{k\in\{k\in \mathbb{N}:k|pq\}}\Phi_{k}(X)\\
		   = & \prod_{k\in\{1,p,q,pq\}}\Phi_{k}(X)\\
           = & (X-1)(\sum_{i=0}^{p-1} X^i)(\sum_{i=0}^{q-1} X^i)\Phi_{pq}(X)
\end{align}
\end{gather}

Implying,
\begin{gather}
\Phi_{pq}(X)  =  \frac{X^{pq}-1}{(X-1)(\sum_{i=0}^{p-1} X^i)(\sum_{i=0}^{q-1} X^i)}
\end{gather}

By Theorem \ref{thrm:factor}, this implies,
\begin{gather}
    \Phi_{pq}(X) = \frac{(X-1)(\sum_{i=0}^{p-1}X^i)(\sum_{i= 0}^{q-1}X^{i})(1+(X-1)\sum_{i=0}^{q-1}\sum_{j=1}^{\floor{\frac{pi}{q}}}X^{pi - qj})}{(X-1)(\sum_{i=0}^{p-1} X^i)(\sum_{i=0}^{q-1} X^i)}\\
\Phi_{pq}(X) = 1+(X-1)\sum_{i=0}^{q-1}\sum_{j=1}^{\floor{\frac{pi}{q}}}X^{pi - qj}
\end{gather}

\end{proof}

\section*{References}
\small

[BG64]   J. C. G. Boot and Ralph Greenberg. E1637.The American Math-ematical Monthly, 71(7):799–799, 1964.\\

\noindent[Ge]     Yimin Ge. Elementary properties of cyclotomic polynomials.\\

\noindent[HWL79] Jr H W Lenstra. Vanishing sums of roots of unity. In Proceedings bicentennial congress Wiskundig Genootschap, Math. CentreTracts 100/101, Mathematisch Centrum Amsterdam, page 249,1979.\\

\noindent[LL96]   Tsit-Yeun Lam and Ka Hin Leung. On the cyclotomic polynomial $\phi$pq (x).The American Mathematical Monthly, 103(7):562-564,1996.\\

\noindent[Mig83]  A Migotti.  Zur theorie der kreisteilungs-gleichung.S.-B. der Math.-Naturwiss. Class der Kaiser. Akad. Der Wiss., Wien, 87:7-14, 1883.\\

\noindent[Raj13]   Wissam Raji.An introductory course in elementary number theory. The Saylor Foundation, 2013

\end{document}